\documentclass[11pt,reqno]{amsart}

\usepackage{amsmath,amssymb,mathtools}
\usepackage[margin=1.15in]{geometry}
\usepackage[hidelinks]{hyperref}

\hypersetup{
 pdftitle={A High-Rank Gap Theorem for Type-I Bounded Symmetric Domains},
 pdfauthor={Yun Gao},
 pdfkeywords={bounded symmetric domain, Shilov boundary, gap phenomenon,
 Levi form, Hopf boundary lemma}
}

\newcommand{\C}{\mathbb C}
\newcommand{\Span}{\operatorname{span}}
\newcommand{\rankop}{\operatorname{rank}}

\newcommand{\Aut}{\operatorname{Aut}}

\newtheorem{theorem}{Theorem}[section]
\newtheorem{proposition}[theorem]{Proposition}
\newtheorem{lemma}[theorem]{Lemma}
\newtheorem{corollary}[theorem]{Corollary}
\newtheorem*{realift}{Real implicit function theorem}
\theoremstyle{definition}
\newtheorem{definition}[theorem]{Definition}
\theoremstyle{remark}
\newtheorem{remark}[theorem]{Remark}

\title[High-rank gaps for Type-I domains]
{A High-Rank Gap Theorem for Type-I Bounded Symmetric Domains}
\author{Yun Gao}
\thanks{School of Mathematical Sciences, Shanghai Jiao Tong University, Shanghai,
	People's Republic of China. \textbf{Email:}~gaoyunmath@sjtu.edu.cn.
	Supported by the NSFC under Grant Nos.~12471042 and~12471078.}

\date{}

\subjclass[2020]{32H02, 32H35, 32V20}
\keywords{bounded symmetric domain, Shilov boundary, gap phenomenon,
Levi form, Hopf boundary lemma}

\begin{document}

\begin{abstract}
Let $\Omega_{r,s}$ and $\Omega_{r',s'}$ be Type-I bounded symmetric
domains with $s>r$ and $s'>r'$.  Let
$F:U\to M_{r',s'}(\C)$ be holomorphic, where
$\overline{\Omega}_{r,s}\subset U$, and suppose that
$F(S_{r,s})\subset S_{r',s'}$.  If
\[
 k(s-r)\leq s'-r'<(k+1)(s-r)
\quad\text{and}\quad r'>kr,
\]
then, up to target coordinates, the map contains a fixed identity block
of size $r'-kr$.  The proof first turns the boundary equation into a
matrix identity for the first derivative.  A direct dimension count gives
$g(s-r)\leq s'-r'$, where $g$ is the rank of a positive semidefinite matrix arising
from this identity.  The extremal property of the Shilov boundary and
the scalar Hopf boundary lemma then show that certain rows of the map
are constant.  This gives the fixed identity block.  We also give
examples showing that its size is optimal and explain why the proof
needs $F$ on a neighborhood of $\overline{\Omega}_{r,s}$.
\end{abstract}

\maketitle

\section{Introduction}

The rigidity of holomorphic maps between bounded symmetric domains is a
classical theme in several complex variables.  Its rank-one model is the
unit ball $B^n$.  Classical work of Poincar\'e on sphere equivalences
already exhibited the exceptional rigidity of this model \cite{Poincare};
in higher dimension Alexander proved that every proper holomorphic
self-map of $B^n$, $n\geq2$, is an automorphism \cite{Alexander}.  When the target
dimension is allowed to increase, nonlinear proper maps occur and the
central question becomes how much extra dimension is needed before new
maps can appear.  The boundary regularity and algebraicity results of
Cima--Suffridge and Forstneri\v{c} provided an important 
foundation for this problem \cite{CS,F1,F2}, while D'Angelo constructed
and studied basic nonlinear polynomial examples \cite{Da1,Da2}.

This dimension problem led to the \emph{gap phenomenon} for proper maps
between balls.  Faran proved that a rational proper map
$B^n\to B^N$ is equivalent to a linear embedding when
$n+1\leq N\leq2n-2$ \cite{Fa}; Huang subsequently obtained the
corresponding low-codimension linearity under local boundary regularity
hypotheses \cite{Hu1}.  At the first critical dimension $N=2n-1$,
Huang--Ji obtained a precise classification \cite{HJ}.  Later work of
Huang--Ji--Xu and Huang--Ji--Yin revealed further forbidden intervals
in the target dimension \cite{hjx,HJY}.  These results led to the
general Gap Conjecture, which predicts a sequence of dimension ranges
on which every proper ball map has smaller geometric rank and hence
reduces to a lower-dimensional target; see \cite{HJY2} for an overview.

For bounded symmetric domains of higher rank, rigidity has been studied
by both geometric and analytic methods; see, for example,
\cite{MT,Tu1,Tu2,MNT,Ng1,Ng2,Chan1,Chan2}.  For Type-I domains, Kim
studied holomorphic maps between closed Grassmannian orbits
\cite{Kim1}, while Kim--Zaitsev proved rigidity theorems for local CR
maps between Shilov boundaries and for proper holomorphic maps
\cite{KZ13,KZ15}.  These results suggest that gap phenomena should not
be confined to maps between balls.  The mixed-rank problem, in which the
source has rank one and the target has higher rank, was treated in
\cite{GaoGap}.  The purpose of the present paper is to establish the
corresponding gap intervals when the source also has higher rank.

For integers $s>r\geq1$, the Type-I bounded symmetric domain is
\[
 \Omega_{r,s}
 :=\{Z\in M_{r,s}(\C): I_r-ZZ^H>0\}.
\]
Its Shilov boundary is
\[
 S_{r,s}
 :=\{Z\in M_{r,s}(\C):ZZ^H=I_r\}.
\]
For $Z\in M_{r,s}(\C)$, associate the $r$-dimensional space
\[
 V_Z:=\{(u,uZ):u\in\C^r\}\subset\C^{r+s}
\]
and define
\[
 I_{r,s}:=\begin{pmatrix}I_r&0\\0&-I_s\end{pmatrix},
\]
write
\[
 \langle\xi,\eta\rangle_{r,s}:=\xi I_{r,s}\eta^H
 \qquad(\xi,\eta\in\C^{r+s})
\]
where vectors are written as rows unless otherwise specified, and
$A^H=\overline A^{\,t}$ denotes the Hermitian transpose.  Then the form
is positive on $V_Z$ for $Z\in\Omega_{r,s}$, whereas $V_Z$ is a maximal
null $r$-plane for $Z\in S_{r,s}$.  Moreover,
\[
 V_Z\perp V_W\quad\Longleftrightarrow\quad I_r-ZW^H=0.
\]
Thus the Type-I domain, its Shilov boundary, and the orthogonality
relation are encoded by the same Hermitian form.  The proof in
\cite{GaoGap} is elementary and is based on this orthogonal structure.
In the rank-one case, a CR map between open pieces of Shilov boundaries
extends holomorphically, and polarization shows that the extension
preserves orthogonality.  One can then choose mutually orthogonal source
points and count the dimensions of the corresponding target spaces.
This gives the sharp intervals
\[
 k(s-1)\leq s'-r'<(k+1)(s-1)
\]
and a fixed null subspace of dimension at least $r'-k$.

In the present paper, we consider a holomorphic map satisfying the
global boundary hypothesis
\[
 F:U\longrightarrow M_{r',s'}(\C),\qquad
 \overline{\Omega}_{r,s}\subset U,
 \qquad F(S_{r,s})\subset S_{r',s'},
\]
where $U$ is connected.  The same orthogonal viewpoint remains useful
when the source rank is $r\geq2$, but the boundary equation is now
matrix-valued.  After polarization near a boundary point $Z_0$, we
obtain a family of
complex-linear maps $\mathcal M_{Z,\overline W}:M_r(\C)\to M_{r'}(\C)$
such that
\[
 \mathcal M_{Z,\overline W}(I_r-ZW^H)
 =I_{r'}-F(Z)F(W)^H.
\]
In particular, $F$ preserves orthogonality locally.  Although both
estimates ultimately use orthogonality, the higher-rank argument is not
a direct extension of the rank-one dimension count in \cite{GaoGap}.
In the rank-one case, the estimate is obtained by choosing mutually
orthogonal source points and comparing the dimensions of sums and
intersections of their image spaces.  Here the polarized identity must
first be differentiated along the Shilov boundary.  The values of
$\mathcal M_{Z_0}$ on the matrix units form a positive semidefinite block
matrix, whose factorization gives
\[
 \mathcal M_{Z_0}(T)
 =\sum_{\mu=1}^{g(Z_0)}A_\mu T A_\mu^H.
\]
Separating the $s-r$ source column directions then produces mutually
orthogonal subspaces of $\C^{s'-r'}$, all of dimension $g(Z_0)$, and
hence
\[
 g(Z_0)(s-r)\leq s'-r'.
\]
Proving this matrix factorization and extracting the $s-r$ orthogonal
target subspaces from it constitute one of the main steps of the proof.

A second difficulty is that this calculation initially produces a null
subspace only at $Z_0$ and does not show that the same subspace is
contained in $V_{F(Z)}$ at other points.

The condition on the whole Shilov boundary supplies the additional
global argument.  It first implies
$I_{r'}-F(Z)F(Z)^H\succeq0$ throughout $\Omega_{r,s}$.  Applied to
suitable analytic discs, the Hopf boundary lemma then shows that the
null subspace obtained at $Z_0$ is contained in $V_{F(Z)}$ for every
$Z$.  The fixed subspace gives the block decomposition in the following
theorem.

\begin{theorem}\label{thm:main}
Let $s>r\geq2$ and $s'>r'$.  Suppose that
$F:U\to M_{r',s'}(\C)$ is holomorphic on a connected open neighborhood
$U$ of $\overline{\Omega}_{r,s}$ and
$F(S_{r,s})\subset S_{r',s'}$.
Let $k\geq1$ be the integer determined by
\begin{equation}\label{eq:gap}
 k(s-r)\leq s'-r'<(k+1)(s-r).
\end{equation}
If $r'>kr$, then, after unitary changes of the target row and column
coordinates,
\begin{equation}\label{eq:main-block}
 F(Z)=
 \begin{pmatrix}
  I_{r'-kr}&0\\
  0&F_0(Z)
 \end{pmatrix}.
\end{equation}
Here
\[
 F_0:U\longrightarrow
 M_{kr,\,kr+s'-r'}(\C)
\]
is holomorphic and satisfies
\[
 F_0(S_{r,s})\subset S_{kr,\,kr+s'-r'},
 \qquad
 F_0(\Omega_{r,s})\subset
 \overline{\Omega}_{kr,\,kr+s'-r'}.
\]
\end{theorem}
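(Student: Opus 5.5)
We follow the outline already sketched in the introduction. First we polarize the boundary equation. Fix $Z_0\in S_{r,s}$. Since $F$ is holomorphic near $\overline{\Omega}_{r,s}$ and $S_{r,s}$ is a generic real-analytic submanifold on which $I_r-ZZ^H$ vanishes, the real-analytic function $I_{r'}-F(Z)F(Z)^H$ vanishes on $S_{r,s}$. A division argument, done in local defining coordinates for $S_{r,s}$ (given by the entries of the Hermitian matrix $I_r-ZZ^H$), yields a real-analytic linear map $\mathcal M_{Z,\overline Z}$ with
\[
\mathcal M_{Z,\overline Z}(I_r-ZZ^H)=I_{r'}-F(Z)F(Z)^H .
\]
We then complexify in $(Z,\overline W)$ to obtain $\mathcal M_{Z,\overline W}$ satisfying the identity displayed in the introduction. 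In particular, if $I_r-ZW^H=0$ with $Z,W$ near $Z_0$, then $I_{r'}-F(Z)F(W)^H=0$, so $F$ preserves orthogonality locally.

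Next we differentiate along $S_{r,s}$ at $Z_0$, using $\operatorname{Aut}(\Omega_{r,s})$ to normalize $Z_0=(I_r\ 0)$ and $F(Z_0)=(I_{r'}\ 0)$. We move into $\Omega_{r,s}$ along $Z=Z_0-tE$ with $t\to0^+$, or equivalently take the normal derivative of the positive semidefinite identity. Here we use that $I_{r'}-F(Z)F(Z)^H\succeq0$ on $\Omega_{r,s}$. This holds because each eigenvalue condition is plurisubharmonic in the sense that $v(FF^H)v^H$ is subharmonic, and it is $\le 1$ on the Shilov boundary, so the maximum principle over the Shilov boundary applies.

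From this we get that $T\mapsto\mathcal M_{Z_0}(T)$ is completely positive: the block matrix $(\mathcal M_{Z_0}(E_{ij}))_{i,j}$ is positive semidefinite. Choi's factorization then gives $\mathcal M_{Z_0}(T)=\sum_{\mu=1}^{g}A_\mu TA_\mu^H$ with $g=g(Z_0)$ equal to its rank.

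Now we differentiate the identity in the tangential column directions $Z=Z_0+(0\ \ \epsilon e_j)$, $j=1,\dots,s-r$, and compare with the corresponding derivatives of $F$. The component of $\partial F$ in the last $s'-r'$ columns, composed with the $A_\mu$, must realize the source orthogonality relations. This produces, for each $j$, a $g$-dimensional subspace $W_j\subset\C^{s'-r'}$, with $W_j\perp W_{j'}$ for $j\ne j'$ (coming from $e_je_{j'}^H=0$). Hence $g(s-r)\le s'-r'$, and therefore $g\le k$ by \eqref{eq:gap}.

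Since $g\le k$, the images of the $A_\mu$ span a subspace $R\subset\C^{r'}$ of dimension at most $gr\le kr$. Take a subspace $N$ with $\dim N\ge r'-kr$ orthogonal to every $A_\mu$, and let $v\in N$ be a unit vector. Then $v\,\mathcal M_{Z_0}(T)\,v^H=0$ for all $T$, so the normal derivative at $Z_0$ of the nonnegative function $\phi_v(Z)=v(I_{r'}-F(Z)F(Z)^H)v^H$ vanishes. We then restrict to analytic discs through $Z_0$ transverse to $S_{r,s}$, for example $\zeta\mapsto(\zeta I_r\ 0)$ or its diagonal variants. Along such a disc, $\log\|vF\|^2$ is subharmonic with boundary value $0$ at $Z_0$, because $vF(Z_0)$ is a unit vector in the null plane. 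The Hopf lemma then forces the subharmonic function $\|vF\|^2-1\le0$ to be identically $0$ on the disc. Equality in the Cauchy–Schwarz/maximum principle for the holomorphic vector $vF(\zeta)$ then forces $vF$ to be constant. Propagating by varying $Z_0$ and the discs, and using the identity theorem on the connected set $U$, we conclude $vF\equiv vF(Z_0)$ on $U$.

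Doing this for an orthonormal basis of $N$ gives $r'-kr$ constant orthonormal rows. Unitary changes of the row and column coordinates turn these rows into $(I_{r'-kr}\ 0)$. The relation $FF^H=I$ on $S_{r,s}$ then forces the off-diagonal block to vanish, because the remaining rows are orthogonal to the fixed ones on $S_{r,s}$, hence everywhere by holomorphy in $Z$ paired with constant vectors. This gives \eqref{eq:main-block}. The properties of $F_0$ follow: $F_0(S_{r,s})\subset S$ holds by restriction, and $F_0(\Omega_{r,s})\subset\overline\Omega$ holds by the semidefiniteness shown above.

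We expect the main obstacle to be the orthogonality extraction giving $g(s-r)\le s'-r'$. The difficulty is showing that the $s-r$ subspaces all have full dimension $g$, which needs an independence argument from the positivity of the Choi matrix. The Hopf step also has a subtle point: producing suitably many discs so that the constancy at $Z_0$ covers an open set.
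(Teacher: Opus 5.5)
Your outline has the same architecture as the paper: polarization, $I_{r'}-FF^H\succeq0$ on $\Omega_{r,s}$, a factorization $\mathcal M_{Z_0}(T)=\sum_\mu A_\mu TA_\mu^H$, orthogonal column subspaces, the kernel $N$, and Hopf. However, the two steps you flag yourself are genuine gaps.

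\textbf{Complete positivity.} It does not follow from the normal derivative. Approaching $Z_0=(I_r,0)$ along $Z_0-t(P,Q)$ gives $(I_r-ZZ^H)/t\to P+P^H$. So semidefiniteness of $I_{r'}-FF^H$ inside only yields $\mathcal M_{Z_0}(D)\succeq0$ for $D\succeq0$, i.e.\ $\mathcal M_{Z_0}$ is a positive map. A positive map need not be completely positive (the transpose on $M_r(\C)$, $r\geq2$, is the standard example). So you cannot yet conclude that $[\mathcal M_{Z_0}(E_{\alpha\beta})]$ is positive semidefinite or invoke Choi.

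The missing idea is the Levi-form identity. Apply $-\partial\bar\partial$ to $\mathcal M_Z(I_r-ZZ^H)=I_{r'}-F(Z)F(Z)^H$ at $Z_0$ in the complex tangential directions $(0,X),(0,Y)$. Since $\rho(Z_0)=0$ and $\partial\rho_{Z_0}(0,X)=0=\bar\partial\rho_{Z_0}(0,Y)$, every term where a derivative falls on $\mathcal M$ vanishes. With $dF_{Z_0}(0,X)=(0,L_{Z_0}(X))$ you get $L_{Z_0}(X)L_{Z_0}(Y)^H=\mathcal M_{Z_0}(XY^H)$.

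Now take $X$, $Y$ to be the matrices whose only nonzero column is the $j$-th, equal to $e_\alpha$ resp.\ $e_\beta$. This gives $[\mathcal M_{Z_0}(E_{\alpha\beta})]=\mathbf L_j\mathbf L_j^H$ for \emph{every} $j$, where $\mathbf L_j$ stacks the $r$ resulting matrices $L_{Z_0}(\cdot)$. Positive semidefiniteness follows at once. Since $\rankop(\mathbf L_j\mathbf L_j^H)=\rankop\mathbf L_j$, the row space of each $\mathbf L_j$ has dimension exactly $g$; this is precisely the independence issue you could not settle. Orthogonality for different $j$ comes from $XY^H=0$ when $X$, $Y$ are supported in different columns. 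Your phrase about $\partial F$ ``composed with the $A_\mu$'' supplies none of this.

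\textbf{Propagation.} The disc $\zeta\mapsto(\zeta I_r,0)$ only makes $vF$ constant along one complex curve. You also cannot ``vary $Z_0$'', since $N=N_{Z_0}$ depends on $Z_0$ and nothing guarantees that a given $v$ lies in the kernel at another boundary point. You need discs all ending at the \emph{same} $Z_0$ and passing through every interior point. The paper constructs them as follows:
\begin{itemize}
\item For $Z\in\Omega_{r,s}$, choose $\sigma\in\Aut(\Omega_{r,s})$ with $\sigma(Z)=0$ and set $\gamma(\zeta)=\sigma^{-1}(\zeta\sigma(Z_0))$.
\item Then $\gamma(0)=Z$, $\gamma(1)=Z_0$, $\gamma(\zeta)\in\Omega_{r,s}$ for $|\zeta|<1$, and $\gamma$ extends past $\zeta=1$.
\item The rule $I_r-\sigma^{-1}(W)\sigma^{-1}(W)^H=R(W)(I_r-WW^H)R(W)^H$ shows that $(I_r-\gamma(t)\gamma(t)^H)/(1-t)$ has a limit $D_\gamma$.
\item Since $v\mathcal M_{Z_0}(T)=0$ for all $T$, the radial difference quotient of $u_v(\zeta)=v\bigl(I_{r'}-F(\gamma(\zeta))F(\gamma(\zeta))^H\bigr)v^H$ tends to $v\mathcal M_{Z_0}(D_\gamma)v^H=0$.
\item Hopf then gives $u_v\equiv0$, hence $vF(Z)=vF(Z_0)$ for every interior $Z$, and the identity theorem extends this to $U$.
\end{itemize}

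The rest of your argument is fine: polarization, $g\le k$, the kernel dimension, and killing the off-diagonal block. Two small corrections. For $I_{r'}-FF^H\succeq0$, argue via $\|vF\|=\sup_{\|b\|=1}|vFb^H|$ and the maximum-modulus property of $S_{r,s}$, not a general subharmonic maximum principle. And normalize the target by unitary column changes, not by automorphisms.
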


\begin{remark}\label{rem:sharpness}
The dimension $r'-kr$ of the fixed block is optimal.  Write
\[
 s'-r'=k(s-r)+\ell,
 \qquad 0\leq\ell<s-r,
\]
and set $d=r'-kr$.  Then $r'=d+kr$ and $s'=d+ks+\ell$.  If
\[
 D_k(Z):=\operatorname{diag}(Z,\ldots,Z)\in M_{kr,ks}(\C)
\]
has $k$ copies of $Z$, define
\[
 F_k(Z):=
 \begin{pmatrix}
  I_d&0&0\\
  0&D_k(Z)&0_{kr\times\ell}
 \end{pmatrix}.
\]
Since
$D_k(Z)D_k(Z)^H=\operatorname{diag}(ZZ^H,\ldots,ZZ^H)$, this map sends
$S_{r,s}$ into $S_{r',s'}$ and $\overline{\Omega}_{r,s}$ into
$\overline{\Omega}_{r',s'}$.

The common subspace of the planes $V_{F_k(Z)}$, $Z\in\Omega_{r,s}$, is
exactly the $d$-dimensional null subspace determined by the identity
block.  To see this, write the first $r'$ coordinates of a common vector
as $x=(x_0,x_1,\ldots,x_k)$, where $x_0\in\C^d$ and
$x_j\in\C^r$.  Since $xF_k(Z)$ is independent of $Z$ and
$\Omega_{r,s}$ is open, each $x_jZ$ can be constant only when $x_j=0$
for $1\leq j\leq k$.  Thus the fixed block cannot be enlarged.
\end{remark}

Every space $V_{F(Z)}$ therefore contains the same null
$(r'-kr)$-dimensional subspace.  Since a positive $r'$-plane contains no
nonzero null vector, such an $F$ cannot map $\Omega_{r,s}$ into
$\Omega_{r',s'}$.  The lower inequality in \eqref{eq:gap} identifies the
$k$-th interval, while the proof uses the strict upper inequality.

The proof also works when $r=1$.  In that case its numerical condition
and block decomposition agree with those in \cite{GaoGap}.  The two
results have different hypotheses and use the orthogonal structure in
different ways.  The paper \cite{GaoGap} starts with a CR map defined on
an open piece of the sphere and studies its holomorphic extension as a
map preserving orthogonality between Grassmannians.  Here the map is assumed from the
outset to be holomorphic near the whole closed domain and to preserve the
entire Shilov boundary.  This stronger global hypothesis allows the
fixed subspace to be obtained directly from the first derivative at one
boundary point and the Hopf boundary lemma.  We state
Theorem~\ref{thm:main} for $r\geq2$ because the rank-one case has already
been established in \cite{GaoGap}.  A related use of the Hopf lemma
appears in Kim's work on proper maps at general Shilov points
\cite{KimDuke}.

The proof is organized as follows.  Section~\ref{sec:Levi} introduces
the orthogonal Grassmannian terminology and proves
Lemma~\ref{lem:Levi} directly from the boundary equations.
Section~\ref{sec:positive}
shows that positive points map to semi-positive points.
Section~\ref{sec:linear} proves the dimension estimate, and
Section~\ref{sec:Hopf} constructs the null subspace and proves
that it is independent of $Z$.  Theorem~\ref{thm:main} is proved in
Section~\ref{sec:proof}.

\section{Orthogonal structure and the Levi form}
\label{sec:Levi}

Let $\mathcal G(r,s)$ denote the Grassmannian $G(r,r+s)$ endowed with
the Hermitian form of signature $(r,s)$ defined by $I_{r,s}$.  A point
$\mathcal Z\in\mathcal G(r,s)$ corresponds to an $r$-plane
$V_{\mathcal Z}\subset\C^{r+s}$.  If $A_{\mathcal Z}$ is any
$r\times(r+s)$ matrix whose row space is $V_{\mathcal Z}$, then
$A_{\mathcal Z}$ is called a representative matrix of $\mathcal Z$.

\begin{definition}\label{def:orthogonal-points}
For $\mathcal Z,\mathcal W\in\mathcal G(r,s)$, we say that
$\mathcal Z$ and $\mathcal W$ are \emph{orthogonal}, and write
$\mathcal Z\perp\mathcal W$, if
\[
 A_{\mathcal Z}I_{r,s}A_{\mathcal W}^H=0.
\]
A point $\mathcal Z\in\mathcal G(r,s)$ is called
\emph{positive}, \emph{semi-positive}, or \emph{null} according as
\[
 A_{\mathcal Z}I_{r,s}A_{\mathcal Z}^H>0,
 \qquad
 A_{\mathcal Z}I_{r,s}A_{\mathcal Z}^H\succeq0,
 \qquad\text{or}\qquad
 A_{\mathcal Z}I_{r,s}A_{\mathcal Z}^H=0,
\]
respectively.  A linear subspace $K\subset\C^{r+s}$ is called
\emph{null} if the Hermitian form vanishes identically on
$K\times K$.
\end{definition}

Replacing $A_{\mathcal Z}$ and $A_{\mathcal W}$ by
$XA_{\mathcal Z}$ and $YA_{\mathcal W}$, with $X,Y$ invertible,
changes $A_{\mathcal Z}I_{r,s}A_{\mathcal W}^H$ to
$X(A_{\mathcal Z}I_{r,s}A_{\mathcal W}^H)Y^H$.  Thus its vanishing is
unchanged.  When $\mathcal Z=\mathcal W$, this is a congruence
transformation and hence also preserves positivity and semi-positivity.
Therefore, the notions in Definition~\ref{def:orthogonal-points} do not
depend on the representative matrices.

In the standard affine chart, write
\[
 \mathcal Z=[\,I_r,Z\,],
 \qquad
 V_{\mathcal Z}=\{(u,uZ):u\in\C^r\}.
\]
Then
\[
 [\,I_r,Z\,]I_{r,s}[\,I_r,Z\,]^H=I_r-ZZ^H.
\]
Consequently, the positive, semi-positive, and null points in this
chart are precisely the points represented by matrices in
$\Omega_{r,s}$, $\overline{\Omega}_{r,s}$, and $S_{r,s}$,
respectively.  Moreover,
\[
 \mathcal Z\perp\mathcal W
 \quad\Longleftrightarrow\quad
 [\,I_r,Z\,]I_{r,s}[\,I_r,W\,]^H
 =I_r-ZW^H=0.
\]

The purpose of this section is to obtain the first-derivative identity
that will drive the dimension count.  We normalize one boundary point,
compute the complex tangent space and the Levi form, and polarize the
boundary equation satisfied by $F$.  Differentiating the resulting
factorization gives Lemma~\ref{lem:Levi}.

Choose an arbitrary point $Z_0\in S_{r,s}$. Unitary
changes of source and target column coordinates allow us to assume
\[
 Z_0=(I_r,0),\qquad F(Z_0)=(I_{r'},0).
\]
Write $Z=(A,B)$ with
$A\in M_r(\C)$ and $B\in M_{r,s-r}(\C)$.  A matrix defining function
for $S_{r,s}$ near $Z_0$ is
\[
 \rho(Z,\overline Z):=I_r-AA^H-BB^H.
\]
On the diagonal we abbreviate this expression as $\rho(Z)$.
A $(1,0)$ tangent vector at $Z=(A,B)$ will be written as
$(P,Q)\in M_r(\C)\times M_{r,s-r}(\C)$.  Direct differentiation gives
\[
 (\partial\rho)_Z(P,Q)=-PA^H-QB^H.
\]
At $Z_0=(I_r,0)$ this becomes
$(\partial\rho)_{Z_0}(P,Q)=-P$.  Hence $(P,Q)$ is a complex tangent
vector exactly when $P=0$, and consequently
\[
 T_{Z_0}^{1,0}S_{r,s}
 =\{(0,X):X\in M_{r,s-r}(\C)\}
 \simeq M_{r,s-r}(\C).
\]

For $X,Y\in M_{r,s-r}(\C)$, define the Levi form of $\rho$ by
\[
 \mathcal L_{Z_0}(X,\overline Y)
 :=-(\partial\bar\partial\rho)_{Z_0}
       \bigl((0,X),\overline{(0,Y)}\bigr).
\]
To see the Levi form directly, let $\tau$ and $\sigma$ be complex
scalars and note that the coefficient of
$\tau\overline\sigma$ in
\[
 -(\tau X+\sigma Y)(\tau X+\sigma Y)^H
\]
is $-XY^H$.  The minus sign in the definition of $\mathcal L_{Z_0}$
therefore gives
\begin{equation}\label{eq:source-Levi}
 \mathcal L_{Z_0}(X,\overline Y)=XY^H.
\end{equation}

At $Z_0=(I_r,0)$ and for real $t$,
\[
 \rho(I_r+tP,tQ)
 =-t(P+P^H)-t^2(PP^H+QQ^H).
\]
Consequently, the real differential of $\rho$ is
\[
 d\rho_{Z_0}(P,Q)=-(P+P^H).
\]
For every Hermitian matrix $H$,
\[
 d\rho_{Z_0}(-H/2,0)=H.
\]
Hence $d\rho_{Z_0}$ is surjective onto the Hermitian matrices.

We use the following standard submersion form of the real implicit function
theorem.  Its $C^1$ version follows from \cite[Theorem~9.28]{Rudin} after
choosing a complement to the kernel of the differential, and the same proof
using the $C^k$ inverse function theorem gives the stated regularity.

\begin{realift}
Let $E$ and $F$ be finite-dimensional real vector spaces, let
$\mathcal O\subset E$ be open, and let
$\Phi:\mathcal O\to F$ be a $C^k$ map, where $1\leq k\leq\infty$.
Suppose that $p\in\mathcal O$ and that
$d\Phi_p:E\to F$ is surjective.  Put $K=\ker d\Phi_p$.  Then there are
neighborhoods $\mathcal O_0$ of $p$, $\mathcal U_0$ of $0$ in $K$, and
$\mathcal V_0$ of $0$ in $F$, together with a $C^k$ diffeomorphism
\[
 \Psi:\mathcal O_0\longrightarrow
 \mathcal U_0\times\mathcal V_0,
 \qquad \Psi(p)=(0,0),
\]
such that
\[
 \Phi\bigl(\Psi^{-1}(u,v)\bigr)=\Phi(p)+v
 \qquad
 ((u,v)\in\mathcal U_0\times\mathcal V_0).
\]
Consequently, $\Phi^{-1}(\Phi(p))\cap\mathcal O_0$ is a $C^k$
submanifold of $E$ of codimension $\dim_{\mathbb R}F$, and
\[
 T_p\bigl(\Phi^{-1}(\Phi(p))\bigr)=\ker d\Phi_p.
\]
\end{realift}

Apply this theorem with
\[
 E=M_{r,s}(\C),\qquad
 F=\operatorname{Herm}_r,\qquad
 \Phi=\rho,\qquad p=Z_0,
\]
where the matrix spaces are regarded as real vector spaces and
$\operatorname{Herm}_r$ denotes the real vector space of Hermitian
$r\times r$ matrices.  Since $\rho(Z_0)=0$ and
$d\rho_{Z_0}$ is surjective, it follows that
$S_{r,s}=\rho^{-1}(0)$ is a smooth real submanifold near $Z_0$ of real
codimension $r^2$, and its tangent space is
\[
 T_{Z_0}S_{r,s}=\ker d\rho_{Z_0}.
\]

\begin{proposition}\label{prop:polarized-factorization}
There is a connected open neighborhood $U_0\subset U$ of $Z_0$ and a
family of complex-linear maps
\[
 \mathcal M_{Z,\overline W}:M_r(\C)\longrightarrow M_{r'}(\C),
 \qquad Z,W\in U_0,
\]
depending holomorphically on $(Z,\overline W)$ and satisfying
\begin{equation}\label{eq:polarized-factor}
 \mathcal M_{Z,\overline W}\bigl(I_r-ZW^H\bigr)
 =I_{r'}-F(Z)F(W)^H.
\end{equation}
\end{proposition}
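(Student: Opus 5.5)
The plan is to polarize the real-analytic identity $\Phi(Z,\overline Z):=I_{r'}-F(Z)F(Z)^H$, which vanishes on $S_{r,s}$, into a holomorphic function of $(Z,\overline W)$. We then divide it by the matrix $\rho(Z,\overline W)=I_r-ZW^H$ using a holomorphic Hadamard-type division lemma.

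\textbf{Holomorphic coordinates.} Introduce independent variables $(Z,\zeta)$, with $\zeta$ playing the role of $\overline W$. Set
\[
 R(Z,\zeta):=I_r-Z\zeta^t,\qquad G(Z,\zeta):=I_{r'}-F(Z)\overline F(\zeta)^t,
\]
where $\overline F(\zeta):=\overline{F(\overline\zeta)}$ is holomorphic. Both are holomorphic near $(Z_0,\overline Z_0)$ in $\C^{2rs}$, and on the anti-diagonal $\zeta=\overline Z$ they restrict to $\rho$ and $\Phi$. The complexified variety
\[
 \mathcal S:=\{(Z,\zeta):R(Z,\zeta)=0\}
\]
is the complexification of $S_{r,s}$. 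Its differential at $(Z_0,\overline Z_0)$ is $(P,\Pi)\mapsto -P Z_0^H-Z_0\Pi^t$. Restricted to the $P$-component, this map is already surjective onto $M_r(\C)$. So $R$ is a holomorphic submersion there, and $\mathcal S$ is a complex submanifold of complex codimension $r^2$.

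\textbf{Vanishing of $G$ on $\mathcal S$.} We claim $G$ vanishes on $\mathcal S$ near the base point. The totally real set $\{(Z,\overline Z):Z\in S_{r,s}\}$ has real dimension $2rs-r^2$, which equals the complex dimension of $\mathcal S$. It is a maximally totally real submanifold of $\mathcal S$: its tangent space is $\ker d\rho_{Z_0}$, embedded as $(v,\bar v)$, and multiplication by $i$ does not preserve it. A holomorphic function on the connected manifold $\mathcal S$ that vanishes on a maximal totally real submanifold vanishes identically. Since $G=0$ on $S_{r,s}$ by hypothesis, we get $G\equiv0$ on $\mathcal S$ near the base point.

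\textbf{Division step.} By the holomorphic implicit function theorem, choose holomorphic coordinates $(u,v)$ near $(Z_0,\overline Z_0)$, with $v=R(Z,\zeta)\in M_r(\C)$ and $u$ parametrizing $\mathcal S$. Concretely, we may take $u=(B,\zeta)$ and solve for $A$ from $v$, since $\partial R/\partial A$ is invertible. In these coordinates $G(u,0)=0$, so
\[
 G(u,v)=\int_0^1\frac{d}{dt}G(u,tv)\,dt=\sum_{a,b}v_{ab}\,H_{ab}(u,v),
\]
with $H_{ab}$ holomorphic. Define $\mathcal M(T):=\sum_{a,b}T_{ab}H_{ab}$. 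This gives complex-linear maps $M_r(\C)\to M_{r'}(\C)$, depending holomorphically on $(Z,\zeta)$, with
\[
 \mathcal M_{Z,\zeta}\bigl(R(Z,\zeta)\bigr)=G(Z,\zeta).
\]

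\textbf{Restriction.} Choose a connected neighborhood $U_0$ of $Z_0$ with $U_0\times\overline{U_0}$ inside the coordinate domain; shrinking to a product of balls suffices. Substituting $\zeta=\overline W$ gives \eqref{eq:polarized-factor}.

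\textbf{Main obstacle.} The delicate step is justifying $G\equiv0$ on $\mathcal S$ from vanishing only on the real slice. This requires checking that the real slice is maximally totally real inside $\mathcal S$, via the dimension count and the tangent computation. The analytic continuation then follows by expanding in local coordinates in which the slice is $\mathbb R^N\subset\C^N$.
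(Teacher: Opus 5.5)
Your proposal is correct and follows essentially the paper's route. You polarize to $(Z,\overline W)$, show that $I_{r'}-F(Z)F(W)^H$ vanishes on the complex submanifold $\{ZW^H=I_r\}$ because $\{(Z,\overline Z):Z\in S_{r,s}\}$ is a maximal totally real piece of it (the paper instead complexifies a real parametrization of $S_{r,s}$, which is the same fact), and then divide via the fundamental theorem of calculus in coordinates having $\rho$ as the normal coordinate; your explicit $u=(B,\zeta)$ just replaces the paper's abstract complement $\Theta_0$.

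Two small fixes. Total reality means $T\cap iT=\{0\}$, which holds here because $(v,\bar v)=i(w,\bar w)$ forces $w=0$; it is not enough that multiplication by $i$ fails to preserve $T$. You should also shrink the coordinate domain so that it is star-shaped in $v$ before integrating along $t\mapsto tv$.
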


\begin{proof}
For a second matrix variable $W\in M_{r,s}(\C)$, put
\[
 \rho(Z,\overline W):=I_r-ZW^H,
 \qquad
 G(Z,\overline W):=I_{r'}-F(Z)F(W)^H.
\]
Here and below, $Z$ and $\overline W$ are regarded as independent complex
variables.  Thus $\rho(Z,\overline W)$ and $G(Z,\overline W)$ are
holomorphic in the independent variables $(Z,\overline W)$.  The boundary
hypothesis says $G(Z,\overline Z)=0$ whenever
$\rho(Z,\overline Z)=0$.  To polarize this identity, locally parametrize
the real-analytic manifold $S_{r,s}$ by
$2rs-r^2$ real variables and allow these variables to be complex.
The original boundary identity vanishes for all real values of the
parameters, so the holomorphic identity theorem makes it vanish for
their complex values.  The complexified parametrization has dimension
$2rs-r^2$ and lies in the set $ZW^H=I_r$.  The latter is a complex
submanifold of the same dimension near $(Z_0,\overline Z_0)$ because
the differential of $\rho$ has rank $r^2$ there.  Thus the two agree
after shrinking the neighborhood.  Consequently,
\begin{equation}\label{eq:polarized-boundary}
 G(Z,\overline W)=0
 \quad\text{whenever}\quad
 \rho(Z,\overline W)=0,
\end{equation}
for $(Z,\overline W)$ near $(Z_0,\overline Z_0)$.

At $(Z_0,\overline Z_0)$, keep $\overline W$ fixed and vary $Z$ in the
direction $(P,0)$, where $P\in M_r(\C)$.  Then
\[
 D_Z\rho_{(Z_0,\overline Z_0)}(P,0)=-P.
\]
Since $P$ is arbitrary, the differential of $\rho$ is surjective onto
$M_r(\C)$.  We now explain explicitly how the remaining local coordinates
are chosen.  Let
\[
 E:=M_{r,s}(\C)\times M_{r,s}(\C),
 \qquad
 D\rho_0:=D\rho_{(Z_0,\overline Z_0)}:E\longrightarrow M_r(\C).
\]
Then
\[
 \dim_{\C}\ker D\rho_0=2rs-r^2.
\]
Choose a complex subspace $E_1\subset E$ such that
\[
 E=E_1\oplus\ker D\rho_0.
\]
The restriction $D\rho_0|_{E_1}:E_1\to M_r(\C)$ is an isomorphism.
Choose a complex-linear map
\[
 \Theta_0:E\longrightarrow\C^{2rs-r^2}
\]
which vanishes on $E_1$ and whose restriction to $\ker D\rho_0$ is an
isomorphism onto $\C^{2rs-r^2}$.  Such a map is obtained by choosing a
basis of $\ker D\rho_0$.  Put
\[
 \Theta(Z,\overline W)
 :=\Theta_0(Z-Z_0,\overline W-\overline Z_0)
\]
and define
\[
 \Phi(Z,\overline W)
 :=\bigl(\rho(Z,\overline W),\Theta(Z,\overline W)\bigr).
\]
Its differential at $(Z_0,\overline Z_0)$ is
\[
 D\Phi_{(Z_0,\overline Z_0)}=(D\rho_0,\Theta_0),
\]
which is an isomorphism: its first component is an isomorphism on $E_1$,
and its second component is an isomorphism on $\ker D\rho_0$.  The
holomorphic inverse function theorem therefore shows that $\Phi$ is a
local biholomorphism.  Thus
\[
 (H,\Theta)\in M_r(\C)\times\C^{2rs-r^2},
 \qquad H=\rho(Z,\overline W),
\]
are local holomorphic coordinates near $(Z_0,\overline Z_0)$.

In these coordinates, write
\[
 \widehat G(H,\Theta):=G\bigl(\Phi^{-1}(H,\Theta)\bigr).
\]
Equation~\eqref{eq:polarized-boundary} says
$\widehat G(0,\Theta)=0$.  After shrinking the coordinate neighborhood,
we may assume that $(tH,\Theta)$ remains in it for $0\leq t\leq1$.
The fundamental theorem of calculus applied to
$t\mapsto\widehat G(tH,\Theta)$ gives
\[
 \widehat G(H,\Theta)
 =\int_0^1D_H\widehat G(tH,\Theta)[H]\,dt.
\]
For $K\in M_r(\C)$, define the complex-linear map
\begin{equation}\label{eq:explicit-M}
 \mathcal M_{Z,\overline W}(K)
 :=\int_0^1
 D_H\widehat G\bigl(t\rho(Z,\overline W),
                     \Theta(Z,\overline W)\bigr)[K]\,dt.
\end{equation}
The integrand is holomorphic in $(Z,\overline W)$ and complex-linear in
$K$, so the same is true of $\mathcal M_{Z,\overline W}$.  Substituting
$K=\rho(Z,\overline W)$ into \eqref{eq:explicit-M} yields
\begin{align*}
 \mathcal M_{Z,\overline W}\bigl(\rho(Z,\overline W)\bigr)
 &=\mathcal M_{Z,\overline W}\bigl(I_r-ZW^H\bigr)\\
 &=\int_0^1D_H\widehat G(tH,\Theta)[H]\,dt\\
 &=\widehat G(H,\Theta)\\
 &=G(Z,\overline W).
\end{align*}
Using the definitions of $\rho$ and $G$, this is precisely
\eqref{eq:polarized-factor}.  After shrinking $U_0$ if necessary, the
construction is valid for all $Z,W\in U_0$.
\end{proof}

Setting $W=Z$ in \eqref{eq:explicit-M} gives the explicit formula
\[
 \mathcal M_Z(K)
 =\int_0^1D_H\widehat G\bigl(t(I_r-ZZ^H),
                  \Theta(Z,\overline Z)\bigr)[K]\,dt,
 \qquad K\in M_r(\C),
\]
where $\mathcal M_Z:=\mathcal M_{Z,\overline Z}$.  In particular,
\eqref{eq:polarized-factor} becomes
\begin{equation}\label{eq:defining-factor}
 \mathcal M_Z\bigl(I_r-ZZ^H\bigr)
 =I_{r'}-F(Z)F(Z)^H.
\end{equation}
The map $Z\mapsto\mathcal M_{Z,\overline Z}$ is real-analytic rather
than holomorphic.  We only need its continuity later along the curve
$\gamma(t)$, which gives
$\mathcal M_{\gamma(t)}\to\mathcal M_{Z_0}$ as
$\gamma(t)\to Z_0$.  When $r=r'=1$,
\eqref{eq:defining-factor} is the usual scalar multiplier identity for a
proper map.

For any real direction $\dot Z$,
the product rule applied to \eqref{eq:defining-factor} gives
\[
 d(I_{r'}-FF^H)_{Z_0}(\dot Z)
 =(d\mathcal M)_{Z_0}(\dot Z)\bigl(\rho(Z_0)\bigr)
 +\mathcal M_{Z_0}\bigl(d\rho_{Z_0}(\dot Z)\bigr).
\]
The first term on the right is zero because $\rho(Z_0)=0$.  Therefore
\[
 d(I_{r'}-FF^H)_{Z_0}
 =\mathcal M_{Z_0}\circ d(I_r-ZZ^H)_{Z_0}.
\]
Since $d\rho_{Z_0}$ is onto the Hermitian matrices, this identity
determines $\mathcal M_{Z_0}$ on that real vector space.  Complex linearity
then determines it on all of $M_r(\C)$.

Because $F$ maps one boundary to the other, $dF_{Z_0}$ maps the complex
tangent space of $S_{r,s}$ into that of $S_{r',s'}$.  Under our
normalizations, there is therefore a linear map
\[
 L_{Z_0}:M_{r,s-r}(\C)\longrightarrow M_{r',s'-r'}(\C)
\]
such that
\[
 dF_{Z_0}(0,X)=(0,L_{Z_0}(X)).
\]

\begin{lemma}\label{lem:Levi}
For all $X,Y\in M_{r,s-r}(\C)$,
\begin{equation}\label{eq:Levi-natural}
 L_{Z_0}(X)L_{Z_0}(Y)^H=\mathcal M_{Z_0}(XY^H).
\end{equation}
\end{lemma}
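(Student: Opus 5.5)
We will differentiate the polarized identity \eqref{eq:polarized-factor} once in $Z$ and once in $\overline W$ at $(Z_0,\overline Z_0)$, along complex tangent directions. Since $\mathcal M_{Z,\overline W}$ depends holomorphically on $(Z,\overline W)$ and \eqref{eq:polarized-factor} holds identically on $U_0\times U_0$, we may apply the mixed derivative $\partial_\tau\partial_{\overline\sigma}$ at $\tau=\sigma=0$ to both sides. Here we take
\[
 Z=Z_0+\tau(0,X),\qquad W=Z_0+\sigma(0,Y),
\]
so that $\overline W$ depends antiholomorphically on $\sigma$.

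For the right-hand side we use $F(Z)=F(Z_0)+\tau\,dF_{Z_0}(0,X)+O(\tau^2)$, together with the analogous expansion for $F(W)^H$ in $\overline\sigma$. The $\tau\overline\sigma$ coefficient of $I_{r'}-F(Z)F(W)^H$ is then $-dF_{Z_0}(0,X)\,dF_{Z_0}(0,Y)^H$. By the definition of $L_{Z_0}$, this equals $-(0,L_{Z_0}(X))(0,L_{Z_0}(Y))^H=-L_{Z_0}(X)L_{Z_0}(Y)^H$.

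For the left-hand side, write $\mathcal M_{Z,\overline W}(\rho(Z,\overline W))$ and expand with the product rule. The $\tau\overline\sigma$ coefficient has four contributions.
\begin{itemize}
\item The mixed derivative of $\mathcal M$ evaluated at $\rho(Z_0,\overline Z_0)=0$, which vanishes.
\item $(\partial_\tau\mathcal M)$ applied to $\partial_{\overline\sigma}\rho$. We have $\partial_{\overline\sigma}\rho=-Z_0(0,Y)^H=-(I_r,0)(0,Y)^H=0$, so this term vanishes.
\item Symmetrically, $(\partial_{\overline\sigma}\mathcal M)$ applied to $\partial_\tau\rho=-(0,X)Z_0^H=0$, which also vanishes.
\item $\mathcal M_{Z_0}$ applied to $\partial_\tau\partial_{\overline\sigma}\rho=-(0,X)(0,Y)^H=-XY^H$.
\end{itemize}
This is exactly the computation of the Levi form \eqref{eq:source-Levi}. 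The vanishing of the two cross terms is the key point: it holds because $(0,X)$ and $(0,Y)$ lie in the complex tangent space, that is, $(0,X)Z_0^H=0$.

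Equating the two coefficients and cancelling the common minus sign gives \eqref{eq:Levi-natural}.

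The main point needing care is the legitimacy of differentiating separately in $Z$ and $\overline W$. The paper's own remark only treats $\mathcal M_Z$ on the diagonal and with real directions, whereas we need the off-diagonal identity. This is supplied by Proposition~\ref{prop:polarized-factorization}, which gives \eqref{eq:polarized-factor} holomorphically in the independent variables $(Z,\overline W)$. We also need $\mathcal M_{Z_0,\overline Z_0}=\mathcal M_{Z_0}$, which holds by definition.

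A second point is the existence of $L_{Z_0}$, stated just before the lemma. Polarizing the target identity in the same way shows that $dF_{Z_0}(0,X)F(Z_0)^H=0$. Since $F(Z_0)=(I_{r'},0)$, the first $r'$ columns of $dF_{Z_0}(0,X)$ vanish, so $dF_{Z_0}(0,X)=(0,L_{Z_0}(X))$ for a unique matrix $L_{Z_0}(X)\in M_{r',s'-r'}(\C)$.
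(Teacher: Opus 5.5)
Your proposal is correct and is essentially the paper's proof. The paper applies $-\partial\bar\partial$ to the diagonal identity \eqref{eq:defining-factor} in the directions $(0,X)$ and $(0,Y)$ at $Z_0$. It discards every term in which a derivative falls on $\mathcal M$, because $\rho(Z_0)=0$, $(\partial\rho)_{Z_0}(0,X)=0$ and $(\bar\partial\rho)_{Z_0}(0,Y)=0$. This is your mixed $\partial_\tau\partial_{\overline\sigma}$ computation, written on the diagonal rather than in the polarized variables, so the off-diagonal form of \eqref{eq:polarized-factor} is convenient but not needed.
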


\begin{proof}
Apply $-\partial\bar\partial$ to
\eqref{eq:defining-factor} in the directions $(0,X)$ and $(0,Y)$ at
$Z_0$.  Since
\[
 \rho(Z_0)=0,\qquad
 (\partial\rho)_{Z_0}(0,X)=0,\qquad
 (\bar\partial\rho)_{Z_0}(0,Y)=0,
\]
all terms in which a derivative falls on $\mathcal M$ vanish.  We are
left with
\[
 -\partial\bar\partial(I_{r'}-FF^H)_{Z_0}
       ((0,X),\overline{(0,Y)})
 =\mathcal M_{Z_0}\!\left(
   -\partial\bar\partial\rho_{Z_0}
       ((0,X),\overline{(0,Y)})\right).
\]
Because $F$ is holomorphic, no derivative of $\overline F$ occurs in a
$(1,0)$ direction and no derivative of $F$ occurs in a $(0,1)$
direction.  Thus the left side, computed exactly as in
\eqref{eq:source-Levi}, is
\[
 L_{Z_0}(X)L_{Z_0}(Y)^H.
\]
The right side is $\mathcal M_{Z_0}(XY^H)$ by
\eqref{eq:source-Levi}.  This proves the identity.
\end{proof}

\section{Positive points map to semi-positive points}
\label{sec:positive}

The global Shilov-boundary hypothesis controls the image of every
positive point, not only the image of a boundary point.  We use the
defining extremal property of the Shilov boundary to obtain the
following high-rank analogue of the corresponding interior statement
for a rank-one source.

\begin{theorem}\label{thm:positive-image}
If $F(S_{r,s})\subset S_{r',s'}$, then
$F(\Omega_{r,s})\subset\overline{\Omega}_{r',s'}$.
\end{theorem}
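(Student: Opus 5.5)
We reduce the matrix inequality $I_{r'}-F(Z)F(Z)^H\succeq0$ to a family of scalar maximum-principle statements.

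Fix a unit vector $v\in\C^{r'}$ (a row vector) and a unit vector $w\in\C^{s'}$. Consider the scalar holomorphic function
\[
 f_{v,w}(Z):=vF(Z)w^H,
\]
defined on $U\supset\overline{\Omega}_{r,s}$. For $Z\in S_{r,s}$ we have $F(Z)F(Z)^H=I_{r'}$, so the operator norm satisfies $\|F(Z)\|=1$. Hence $|f_{v,w}(Z)|\le 1$ on $S_{r,s}$.

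The key input is the defining property of the Shilov boundary: it is the smallest closed subset of $\overline{\Omega}_{r,s}$ on which every function holomorphic in a neighborhood of $\overline{\Omega}_{r,s}$ (or continuous on the closure and holomorphic inside) attains its maximum modulus. We take this as standard. To keep the argument elementary, one can also verify it directly using the fact that $\overline{\Omega}_{r,s}$ is the convex hull of $S_{r,s}$. Concretely, each $Z$ with $\|Z\|\le1$ can be written via its singular value decomposition $Z=U\operatorname{diag}(\sigma_1,\dots,\sigma_r)(I_r,0)V$ with $0\le\sigma_j\le1$. The polydisc map
\[
 (\zeta_1,\dots,\zeta_r)\mapsto U\operatorname{diag}(\zeta_j)(I_r,0)V
\]
sends $\overline{\Delta}^r$ into $\overline{\Omega}_{r,s}$, and it sends the torus $|\zeta_j|=1$ into $S_{r,s}$. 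Applying the maximum principle on the polydisc one variable at a time then gives $|f_{v,w}(Z)|\le\max_{S_{r,s}}|f_{v,w}|\le1$. This step is the only place requiring care, and the polydisc argument handles it cleanly, since $f_{v,w}$ is holomorphic on a neighborhood of the closed polydisc's image.

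Therefore $|vF(Z)w^H|\le1$ for all unit vectors $v,w$ and all $Z\in\Omega_{r,s}$. Taking the supremum over $w$ gives $\|vF(Z)\|\le 1$ for every unit $v$, which means $vF(Z)F(Z)^Hv^H\le 1=vv^H$. Hence $I_{r'}-F(Z)F(Z)^H\succeq0$, that is, $F(Z)\in\overline{\Omega}_{r',s'}$ for every $Z\in\Omega_{r,s}$. In the language of Definition~\ref{def:orthogonal-points}, positive points are sent to semi-positive points.

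Strictness is not expected here, since the examples in Remark~\ref{rem:sharpness} show that equality can occur in the interior. The main obstacle is the maximum-modulus reduction to $S_{r,s}$, and the singular-value polydisc slicing resolves it.
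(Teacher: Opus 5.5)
Your proposal is correct and follows the paper's proof: both pass to the scalar holomorphic functions $vF(Z)w^H$, bound them by $1$ using the maximum-modulus property of $S_{r,s}$, and then take the supremum over $w$ to get $\|vF(Z)\|\leq 1$. Your singular-value polydisc slicing is a valid self-contained proof of the Shilov-boundary extremal property, which the paper simply invokes. The convex-hull remark is not needed and would not suffice on its own, since $|f|$ is not convex; the polydisc argument does not depend on it.
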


\begin{proof}
Let $a\in\C^{r'}$ and $b\in\C^{s'}$ be unit row vectors.  The scalar
function $aF(Z)b^H$ is holomorphic on a neighborhood of
$\overline{\Omega}_{r,s}$.  On $S_{r,s}$,
$F(Z)F(Z)^H=I_{r'}$, and therefore
$|aF(Z)b^H|\leq1$.  By the defining extremal property of the Shilov
boundary,
\[
 \sup_{Z\in\overline{\Omega}_{r,s}}|aF(Z)b^H|
 =\sup_{Z\in S_{r,s}}|aF(Z)b^H|\leq1.
\]
Thus the same estimate holds throughout $\Omega_{r,s}$.  For fixed $a$,
taking the supremum over all unit vectors $b$ gives $\|aF(Z)\|\leq1$.
Hence
\[
 a\bigl(I_{r'}-F(Z)F(Z)^H\bigr)a^H
 =1-\|aF(Z)\|^2\geq0.
\]
This holds for every unit row vector $a$, so
\begin{equation}\label{eq:target-defect-positive}
 I_{r'}-F(Z)F(Z)^H\succeq0
 \qquad(Z\in\Omega_{r,s}).
\end{equation}
Therefore
$F(Z)\in\overline{\Omega}_{r',s'}$.
\end{proof}

\section{A dimension estimate}\label{sec:linear}

To obtain the dimension estimate, we apply
\eqref{eq:Levi-natural} to tangent matrices having only one nonzero
column.  Different column positions will give mutually orthogonal
subspaces of $\C^{s'-r'}$, whose dimensions can then be added.  Let
$e_1,\ldots,e_r$ be the standard column basis of $\C^r$.  For a column
vector $x\in\C^r$
and $1\leq j\leq s-r$, let $X_j(x)$ be the matrix whose $j$-th
column is $x$ and whose other columns are zero, and set
\[
 L_j(x):=L_{Z_0}(X_j(x)).
\]
Since
\[
 X_i(x)X_j(y)^H=\delta_{ij}xy^H,
\]
Lemma~\ref{lem:Levi} gives
\begin{equation}\label{eq:orthogonal-L}
 L_i(x)L_j(y)^H=\delta_{ij}\mathcal M_{Z_0}(xy^H).
\end{equation}
Put $E_{\alpha\beta}=e_\alpha e_\beta^H$ and form the block matrix
\[
 \mathcal G_{Z_0}
 :=\bigl[\mathcal M_{Z_0}(E_{\alpha\beta})\bigr]_{\alpha,\beta=1}^r.
\]

\begin{lemma}\label{lem:capacity}
The matrix $\mathcal G_{Z_0}$ is positive semidefinite.  If
$g(Z_0):=\rankop\mathcal G_{Z_0}$, then there are
$A_1,\ldots,A_{g(Z_0)}\in M_{r',r}(\C)$ such that
\begin{equation}\label{eq:matrix-factorization}
 \mathcal M_{Z_0}(T)=\sum_{\mu=1}^{g(Z_0)}A_\mu T A_\mu^H
 \qquad(T\in M_r(\C)).
\end{equation}
Moreover,
\begin{equation}\label{eq:capacity}
 g(Z_0)(s-r)\leq s'-r'.
\end{equation}
\end{lemma}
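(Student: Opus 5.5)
The plan is to read everything off the identity \eqref{eq:orthogonal-L} with $i=j=1$.

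\emph{Step 1: positivity of $\mathcal G_{Z_0}$.} Fix $j=1$. The map $x\mapsto L_1(x)$ is complex linear, so we may write $L_1(x)=\sum_\alpha x_\alpha C_\alpha$ with $C_\alpha:=L_1(e_\alpha)\in M_{r',s'-r'}(\C)$. By \eqref{eq:orthogonal-L} and complex linearity of $\mathcal M_{Z_0}$,
\[
 \mathcal M_{Z_0}(E_{\alpha\beta})=L_1(e_\alpha)L_1(e_\beta)^H=C_\alpha C_\beta^H .
\]
Let $C$ be the $rr'\times(s'-r')$ matrix obtained by stacking $C_1,\dots,C_r$ vertically. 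Then $\mathcal G_{Z_0}=CC^H\succeq0$, and $g(Z_0)=\rankop C$.

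\emph{Step 2: the factorization \eqref{eq:matrix-factorization}.} Write $CC^H=\sum_{\mu=1}^{g}v_\mu v_\mu^H$ with column vectors $v_\mu\in\C^{rr'}$, using a rank-$g$ decomposition such as the spectral one. Cut each $v_\mu$ into $r$ blocks $a_{\mu,1},\dots,a_{\mu,r}\in\C^{r'}$ and set $A_\mu:=[a_{\mu,1},\dots,a_{\mu,r}]\in M_{r',r}(\C)$. The $(\alpha,\beta)$ block of $\sum_\mu v_\mu v_\mu^H$ is then
\[
 \sum_\mu a_{\mu,\alpha}a_{\mu,\beta}^H=\sum_\mu A_\mu E_{\alpha\beta}A_\mu^H .
\]
Hence \eqref{eq:matrix-factorization} holds on the matrix units. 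Both sides are complex linear in $T$, so it holds for all $T\in M_r(\C)$.

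\emph{Step 3: the dimension count \eqref{eq:capacity}.} For each $j$, let $R_j\subset\C^{s'-r'}$ be the span of all rows of all $L_j(x)$ with $x\in\C^r$, that is, the row space of the stacked matrix $C^{(j)}$ built from $L_j(e_\alpha)$. Applying \eqref{eq:orthogonal-L} with $i\neq j$ gives $L_i(e_\alpha)L_j(e_\beta)^H=0$ for all $\alpha,\beta$. This says every row of $C^{(i)}$ is Hermitian-orthogonal to every row of $C^{(j)}$, so $R_i\perp R_j$. Applying Step 1 to each $j$ gives
\[
 C^{(j)}C^{(j)H}=\mathcal G_{Z_0}\quad\text{for every } j .
\]
Therefore $\dim R_j=\rankop C^{(j)}=\rankop\mathcal G_{Z_0}=g(Z_0)$. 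The $s-r$ mutually orthogonal subspaces $R_1,\dots,R_{s-r}$ of $\C^{s'-r'}$ then give $g(Z_0)(s-r)\leq s'-r'$.

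The main point needing care is in Step 3: one must check that the same Gram matrix $\mathcal G_{Z_0}$ arises for each column index $j$, since this is what makes all the $R_j$ have the same dimension. This follows directly from the $\delta_{ij}$ structure of \eqref{eq:orthogonal-L}, because the right-hand side does not depend on $j$. Apart from this, the argument is linear algebra.
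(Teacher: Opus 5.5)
Your proposal is correct and follows essentially the same route as the paper: you stack the $L_j(e_\alpha)$ to write $\mathcal G_{Z_0}$ as the Gram matrix $\mathbf L_j\mathbf L_j^H$, cut a rank-$g(Z_0)$ decomposition of $\mathcal G_{Z_0}$ into blocks to obtain the $A_\mu$, and add the dimensions of the $s-r$ mutually orthogonal row spaces. The point you flag, that the Gram matrix does not depend on $j$, is exactly what the paper uses to get $\dim\mathcal E_j=g(Z_0)$ for every $j$.
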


\begin{proof}
For each column index $j=1,\ldots,s-r$, stack the $r$ matrices
$L_j(e_\alpha)$, where $\alpha=1,\ldots,r$, and write
\[
 \mathbf L_j=
 \begin{pmatrix}
  L_j(e_1)\\ \vdots\\ L_j(e_r)
 \end{pmatrix}
 \in M_{rr',\,s'-r'}(\C).
\]
The $(\alpha,\beta)$ block of
$\mathbf L_j\mathbf L_j^H$ is
$L_j(e_\alpha)L_j(e_\beta)^H$.  Equation
\eqref{eq:orthogonal-L} therefore gives
\[
 \mathcal G_{Z_0}=\mathbf L_j\mathbf L_j^H.
\]
Therefore, $\mathcal G_{Z_0}\succeq0$.  Since
$\rankop(BB^H)=\rankop B$ for every matrix $B$, we also have
$\rankop\mathbf L_j=g(Z_0)$ for every $j$.

A positive semidefinite matrix of rank $g(Z_0)$ is a sum of $g(Z_0)$
rank-one matrices, so write
\[
 \mathcal G_{Z_0}=\sum_{\mu=1}^{g(Z_0)}c_\mu c_\mu^H,
 \qquad
 c_\mu=
 \begin{pmatrix}a_{\mu1}\\ \vdots\\a_{\mu r}\end{pmatrix},
 \quad a_{\mu\alpha}\in\C^{r'}.
\]
Here $c_\mu$ and $a_{\mu\alpha}$ are column vectors.  Set
$A_\mu=[a_{\mu1}\ \cdots\ a_{\mu r}]$.  Comparing the
$(\alpha,\beta)$ blocks gives
\[
 \mathcal M_{Z_0}(E_{\alpha\beta})
 =\sum_{\mu=1}^{g(Z_0)}a_{\mu\alpha}a_{\mu\beta}^H
 =\sum_{\mu=1}^{g(Z_0)}A_\mu E_{\alpha\beta}A_\mu^H.
\]
Since $A_\mu$ has $r$ columns, $\rankop A_\mu\leq r$ for every $\mu$.
The matrix units $E_{\alpha\beta}$ form a basis of $M_r(\C)$, so this
proves \eqref{eq:matrix-factorization}.

It remains to count dimensions.  Define
\[
 \mathcal E_j
 :=\Span\{uL_j(x):x\in\C^r,\ u\in\C^{r'}\}
 \subset\C^{s'-r'}.
\]
Here $u$ is a row vector.  Because
$L_j(x)=\sum_\alpha x_\alpha L_j(e_\alpha)$, the vectors in
$\mathcal E_j$ are exactly the linear combinations of the rows of
$\mathbf L_j$.  Thus $\mathcal E_j$ is the span of the rows of
$\mathbf L_j$, and hence
\[
 \dim\mathcal E_j=\rankop\mathbf L_j=g(Z_0).
\]
If $i\ne j$, then for generators
$v=uL_i(x)$ and $w=u'L_j(y)$,
\[
 \langle v,w\rangle
 =uL_i(x)L_j(y)^H(u')^H=0
\]
by \eqref{eq:orthogonal-L}.  Thus the $s-r$ spaces $\mathcal E_j$ are
mutually orthogonal subspaces of $\C^{s'-r'}$.  Therefore their dimensions
add without overlap, and
\[
 (s-r)g(Z_0)
 =\sum_{j=1}^{s-r}\dim\mathcal E_j
 \leq\dim\C^{s'-r'}=s'-r'.
\]
This is \eqref{eq:capacity}.
\end{proof}

\section{From one boundary point to a fixed subspace}\label{sec:Hopf}

The purpose of this section is to pass from information at the single
point $Z_0$ to a subspace that works for every $Z$.  We first construct
a subspace from the left kernel of $\mathcal M_{Z_0}(I_r)$ and prove
that it is null with the required dimension.  We then combine
Theorem~\ref{thm:positive-image}, analytic discs, and the scalar Hopf
boundary lemma to prove that this same subspace is contained in
$V_{F(Z)}$ for every $Z$.

We use the following standard form of the Hopf boundary lemma; see
\cite[Lemma~3.4]{GT}.
If $u$ is $C^2$ in the unit disc, $C^1$ near $1$, satisfies
$u\geq0$, $\Delta u\leq0$, and $u(1)=0$, then either $u\equiv0$ or
\[
 \lim_{t\nearrow1}\frac{u(t)}{1-t}>0.
\]
We only need the following consequence: if the displayed limit is zero,
then $u$ vanishes identically.

Since $I_r-Z_0Z_0^H=0$, substituting $Z=W=Z_0$ into
\eqref{eq:polarized-factor} gives only $\mathcal M_{Z_0}(0)=0$.
Thus the boundary normalization imposes no condition on
$\mathcal M_{Z_0}(I_r)$; in particular, it need not equal $I_{r'}$ and
may have a nontrivial left kernel.  We use this left kernel in the
following lemma.

\begin{lemma}\label{lem:null-kernel}
Fix $Z_0\in S_{r,s}$ and set
\[
 N_{Z_0}:=\{v\in\C^{r'}:v\mathcal M_{Z_0}(I_r)=0\},
 \qquad
 K:=\Span\bigl\{[\,v,vF(Z_0)\,]:v\in N_{Z_0}\bigr\}
 \subset\C^{r'+s'}.
\]
Then $K$ is a null subspace and
\[
 \dim K=\dim N_{Z_0}
 =r'-\rankop\mathcal M_{Z_0}(I_r)
 \geq r'-g(Z_0)r.
\]
Moreover, every $v\in N_{Z_0}$ satisfies
\begin{equation}\label{eq:annihilate-normal}
 v\mathcal M_{Z_0}(T)=0
 \qquad(T\in M_r(\C)).
\end{equation}
\end{lemma}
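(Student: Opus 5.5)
The plan is to read everything off the factorization \eqref{eq:matrix-factorization} of Lemma~\ref{lem:capacity}, together with the normalization $F(Z_0)=(I_{r'},0)$.

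\textbf{Step 1: the annihilation property.} Put $T=I_r$ in \eqref{eq:matrix-factorization} to get
\[
 \mathcal M_{Z_0}(I_r)=\sum_{\mu=1}^{g(Z_0)}A_\mu A_\mu^H .
\]
For $v\in N_{Z_0}$ this gives
\[
 0=v\mathcal M_{Z_0}(I_r)v^H=\sum_\mu\|vA_\mu\|^2 ,
\]
so $vA_\mu=0$ for every $\mu$. Then for any $T\in M_r(\C)$ we have $v\mathcal M_{Z_0}(T)=\sum_\mu (vA_\mu)TA_\mu^H=0$, which is \eqref{eq:annihilate-normal}.

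\textbf{Step 2: the dimension bound.} The same formula writes $\mathcal M_{Z_0}(I_r)=\mathcal A\mathcal A^H$, where
\[
 \mathcal A=[A_1\ \cdots\ A_{g(Z_0)}]\in M_{r',\,g(Z_0)r}(\C).
\]
Hence $\rankop\mathcal M_{Z_0}(I_r)=\rankop\mathcal A\leq g(Z_0)r$. Since $N_{Z_0}$ is the left kernel of an $r'\times r'$ matrix,
\[
 \dim N_{Z_0}=r'-\rankop\mathcal M_{Z_0}(I_r)\geq r'-g(Z_0)r .
\]

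\textbf{Step 3: $K$ is null and $\dim K=\dim N_{Z_0}$.} The map $v\mapsto[\,v,vF(Z_0)\,]$ is linear and injective, because the first block is $v$ itself. So $K$ is exactly its image and $\dim K=\dim N_{Z_0}$. For the null property, take $v,w\in N_{Z_0}$ and compute
\[
 \bigl\langle[v,vF(Z_0)],[w,wF(Z_0)]\bigr\rangle_{r',s'}
 =v\bigl(I_{r'}-F(Z_0)F(Z_0)^H\bigr)w^H=0,
\]
since $F(Z_0)\in S_{r',s'}$. By sesquilinearity the form then vanishes on all of $K\times K$.

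None of these steps is a real obstacle. The only point needing care is the positivity argument in Step 1, which upgrades "$v$ annihilates $\mathcal M_{Z_0}(I_r)$" to "$v$ annihilates every $A_\mu$". It depends on the explicit sum-of-Hermitian-squares form supplied by Lemma~\ref{lem:capacity}.
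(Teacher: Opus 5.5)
Your proposal is correct and follows essentially the same route as the paper. You use $\sum_\mu\|vA_\mu\|^2=0$ for the annihilation property, the identity $v(I_{r'}-F(Z_0)F(Z_0)^H)w^H=0$ for nullity, and injectivity of $v\mapsto[\,v,vF(Z_0)\,]$ for the dimension. The differences are cosmetic: you bound $\rankop\mathcal M_{Z_0}(I_r)$ through the concatenated matrix $[A_1\ \cdots\ A_{g(Z_0)}]$ rather than by subadditivity of rank, and you get $\dim N_{Z_0}=r'-\rankop\mathcal M_{Z_0}(I_r)$ from rank--nullity rather than by invoking positive semidefiniteness; both are equally valid.
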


\begin{proof}
Choose the matrices $A_\mu$ in
\eqref{eq:matrix-factorization}.  If $v\in N_{Z_0}$,
then
\[
 0=v\mathcal M_{Z_0}(I_r)v^H
 =\sum_{\mu=1}^{g(Z_0)}\|vA_\mu\|^2.
\]
Thus $vA_\mu=0$ for every $\mu$.  Therefore, for every
$T\in M_r(\C)$, \eqref{eq:matrix-factorization} gives
\[
 v\mathcal M_{Z_0}(T)
 =v\left(\sum_{\mu=1}^{g(Z_0)}A_\mu T A_\mu^H\right)
 =\sum_{\mu=1}^{g(Z_0)}(vA_\mu)T A_\mu^H
 =0.
\]

For $v,w\in N_{Z_0}$, the target Hermitian form gives
\[
 \big\langle[\,v,vF(Z_0)\,],
              [\,w,wF(Z_0)\,]\big\rangle_{r',s'}
 =vw^H-vF(Z_0)F(Z_0)^H w^H=0,
\]
where we used $F(Z_0)F(Z_0)^H=I_{r'}$.  By sesquilinearity, the Hermitian
form vanishes identically on $K\times K$, so $K$ is null.

Finally,
\[
 \rankop\mathcal M_{Z_0}(I_r)
 =\rankop\sum_{\mu=1}^{g(Z_0)}A_\mu A_\mu^H
 \leq\sum_{\mu=1}^{g(Z_0)}\rankop(A_\mu A_\mu^H)
 =\sum_{\mu=1}^{g(Z_0)}\rankop A_\mu
 \leq g(Z_0)r.
\]
The first inequality is the subadditivity of rank, and the equality uses
$\rankop(A_\mu A_\mu^H)=\rankop A_\mu$.  Since
$\mathcal M_{Z_0}(I_r)$ is positive semidefinite by
\eqref{eq:matrix-factorization},
\[
 \dim N_{Z_0}
 =r'-\rankop\mathcal M_{Z_0}(I_r)
 \geq r'-g(Z_0)r.
\]
The defining map of $K$ is injective, so $\dim K=\dim N_{Z_0}$.
\end{proof}

\begin{proposition}\label{prop:fixed-incidence}
With $K$ as in Lemma~\ref{lem:null-kernel},
\begin{equation}\label{eq:fixed-incidence}
 K\subset V_{F(Z)}\qquad(Z\in U).
\end{equation}
\end{proposition}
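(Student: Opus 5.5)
The plan is to reduce \eqref{eq:fixed-incidence} to the statement that, for each $v\in N_{Z_0}$, the row vector $vF(Z)$ does not depend on $Z$. Indeed, $[\,v,vF(Z_0)\,]\in V_{F(Z)}=\{(u,uF(Z))\}$ holds exactly when $vF(Z)=vF(Z_0)$. So I fix $v\in N_{Z_0}$ and consider the real-analytic function
\[
 u(Z):=v\bigl(I_{r'}-F(Z)F(Z)^H\bigr)v^H=\|v\|^2-\|vF(Z)\|^2 .
\]
By Theorem~\ref{thm:positive-image}, $u\geq0$ on $\Omega_{r,s}$. Since $F$ is holomorphic, $\|vF\|^2$ is plurisubharmonic, so $u$ is plurisuperharmonic. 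Its restriction to any holomorphic disc therefore has $\Delta\leq0$.

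Next I restrict to the analytic disc $\varphi(\zeta)=(\zeta I_r,0)$, $|\zeta|\le1$. It lies in $\Omega_{r,s}$ for $|\zeta|<1$, satisfies $\varphi(1)=Z_0$, and $I_r-\varphi\varphi^H=(1-|\zeta|^2)I_r$. The function $w:=u\circ\varphi$ is $C^2$ on the closed disc, because $F$ is holomorphic near $\overline\Omega_{r,s}$. It satisfies $w\geq0$, $\Delta w\leq0$ and $w(1)=0$. For real $t$ close to $1$, the point $\varphi(t)$ lies in $U_0$, so \eqref{eq:defining-factor} gives
\[
 w(t)=(1-t^2)\,v\,\mathcal M_{\varphi(t)}(I_r)\,v^H .
\]
Using the continuity $\mathcal M_{\varphi(t)}\to\mathcal M_{Z_0}$ noted after \eqref{eq:defining-factor}, we get
\[
 \frac{w(t)}{1-t}\to 2\,v\mathcal M_{Z_0}(I_r)v^H=0 .
\]
The Hopf boundary lemma then forces $w\equiv0$. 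In particular $u(0)=0$, that is, $\|vF(0)\|=\|v\|$.

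Now $h(Z):=vF(Z)$ is a vector-valued holomorphic map on the connected domain $\Omega_{r,s}$ with $\|h\|\leq\|v\|$ there, by Theorem~\ref{thm:positive-image}. It attains this bound at the interior point $Z=0$. The plurisubharmonic function $\|h\|^2$ then attains its maximum at an interior point, so by the maximum principle it is constant. Consequently
\[
 0=\Delta\|h\|^2=\sum_i\|\nabla h_i\|^2 ,
\]
so each component $h_i$ is constant on $\Omega_{r,s}$. By the identity theorem on the connected set $U$, $h$ is constant on $U$, and by continuity that constant is $vF(Z_0)$. This gives $vF(Z)=vF(Z_0)$ for all $Z\in U$, and by linearity in $v$ this proves \eqref{eq:fixed-incidence}.

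The main obstacle is the boundary step. We need $\mathcal M_{\gamma(t)}$ to be defined and continuous along the radial segment up to $Z_0$, which requires the segment near $1$ to lie in $U_0$. We also need the Hopf lemma's regularity hypotheses to hold at $\zeta=1$. Both follow from the hypothesis that $F$ is holomorphic on a neighborhood of $\overline\Omega_{r,s}$ and from the real-analytic construction of $\mathcal M$ in Proposition~\ref{prop:polarized-factorization}, after shrinking to $t$ near $1$.
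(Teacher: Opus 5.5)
Your proof is correct. It differs from the paper's in how the information at $Z_0$ is spread to all of $\Omega_{r,s}$.

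Your disc $\zeta\mapsto\zeta Z_0$ is the special case $Z=0$ of the paper's discs $\gamma(\zeta)=\sigma^{-1}(\zeta\widetilde Z_0)$. The paper instead runs the Hopf argument on such a disc through every $Z\in\Omega_{r,s}$. This requires the automorphism computation leading to \eqref{eq:disc-defect}. Because the normal defect $D_\gamma=2R(\widetilde Z_0)R(\widetilde Z_0)^H$ is in general not a multiple of $I_r$, the paper must invoke the annihilation property \eqref{eq:annihilate-normal}, namely $v\mathcal M_{Z_0}(T)=0$ for all $T$. That property rests on the factorization \eqref{eq:matrix-factorization} from Lemma~\ref{lem:capacity}.

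Along your straight disc the defect is exactly $2I_r$. So you only need $v\mathcal M_{Z_0}(I_r)v^H=0$, which is immediate from $v\in N_{Z_0}$. Hopf then gives $\|vF(0)\|=\|v\|$. You propagate this from the single interior point $0$ by the strong maximum principle, applied to the subharmonic function $\|vF\|^2\leq\|v\|^2$ (bounded via Theorem~\ref{thm:positive-image}) on the connected, indeed convex, domain $\Omega_{r,s}$.

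Your route is shorter and needs no automorphisms extending past $\overline{\Omega}_{r,s}$. It also shows that this proposition does not depend on the positivity or factorization of $\mathcal M_{Z_0}$; the factorization is then needed only for the lower bound on $\dim N_{Z_0}$ in Lemma~\ref{lem:null-kernel}.

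The paper's route stays entirely one-dimensional. Each interior point is joined to $Z_0$ by a disc on which $u_v$ vanishes identically, so no separate maximum-principle step on the whole domain is required. It also computes the normal derivative of $I_{r'}-FF^H$ along an arbitrary disc ending at $Z_0$, rather than only the straight one.
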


\begin{proof}

Let $Z\in\Omega_{r,s}$.  There is a fractional-linear automorphism
$\sigma\in\Aut(\Omega_{r,s})$ such that $\sigma(Z)=0$.
It and its inverse extend holomorphically past
$\overline{\Omega}_{r,s}$ and carry $S_{r,s}$ onto itself.  Put
$\widetilde Z_0=\sigma(Z_0)$.  Then
$\widetilde Z_0\widetilde Z_0^H=I_r$.  Consider the analytic disc
\[
 \gamma(\zeta):=\sigma^{-1}(\zeta\widetilde Z_0),\qquad |\zeta|<1,
\]
For $|\zeta|<1$,
\[
 I_r-(\zeta\widetilde Z_0)(\zeta\widetilde Z_0)^H
 =(1-|\zeta|^2)I_r>0
 \qquad(|\zeta|<1).
\]
Thus $\zeta\widetilde Z_0\in\Omega_{r,s}$, so $\gamma$ is well defined
and $\gamma(\Delta)\subset\Omega_{r,s}$.
Moreover, $\gamma(0)=Z$, $\gamma(1)=Z_0$, and $\gamma$ extends
holomorphically past $\zeta=1$.  We now evaluate the limit
\[
 D_\gamma:=\lim_{t\nearrow1}
 \frac{I_r-\gamma(t)\gamma(t)^H}{1-t}.
\]
To compute $D_\gamma$, let
\[
 \mathcal T=
 \begin{pmatrix}T_{11}&T_{12}\\T_{21}&T_{22}\end{pmatrix}
\]
be a matrix preserving $I_{r,s}$ which represents $\sigma^{-1}$.  The
four blocks have sizes $r\times r$, $r\times s$, $s\times r$, and
$s\times s$, respectively.  We have
\[
 [I_r\ W]\mathcal T
 =[T_{11}+WT_{21}\ \ T_{12}+WT_{22}].
\]
The first block is invertible, and normalizing it to $I_r$ gives
\[
 \sigma^{-1}(W)
 =(T_{11}+WT_{21})^{-1}(T_{12}+WT_{22}).
\]
Multiplying $\mathcal T I_{r,s}\mathcal T^H=I_{r,s}$ on the left by
$[I_r\ W]$ and on the right by its adjoint gives
\[
 I_r-WW^H
 =(T_{11}+WT_{21})
 \bigl(I_r-\sigma^{-1}(W)\sigma^{-1}(W)^H\bigr)
 (T_{11}+WT_{21})^H.
\]
Multiplying by the inverses of the two outer factors gives
\[
 I_r-\sigma^{-1}(W)\sigma^{-1}(W)^H
 =R(W)(I_r-WW^H)R(W)^H,
 \qquad R(W):=(T_{11}+WT_{21})^{-1}.
\]
Taking $W=t\widetilde Z_0$ in the preceding formula and using
$\widetilde Z_0\widetilde Z_0^H=I_r$, we obtain
\[
 I_r-\gamma(t)\gamma(t)^H
 =(1-t^2)R(t\widetilde Z_0)R(t\widetilde Z_0)^H.
\]
Therefore
\[
 \frac{I_r-\gamma(t)\gamma(t)^H}{1-t}
 =(1+t)R(t\widetilde Z_0)R(t\widetilde Z_0)^H.
\]
Letting $t\nearrow1$ gives
\begin{equation}\label{eq:disc-defect}
 D_\gamma=2R(\widetilde Z_0)R(\widetilde Z_0)^H>0.
\end{equation}
Here the last matrix is positive definite because
$R(\widetilde Z_0)$ is invertible.  Near $t=1$, apply
\eqref{eq:defining-factor} to the real curve
$t\mapsto\gamma(t)$.  By the complex linearity of
$\mathcal M_{\gamma(t)}$,
\[
 \frac{I_{r'}-F(\gamma(t))F(\gamma(t))^H}{1-t}
 =\mathcal M_{\gamma(t)}\left(
   \frac{I_r-\gamma(t)\gamma(t)^H}{1-t}\right).
\]
Since $\mathcal M_{\gamma(t)}\to\mathcal M_{Z_0}$, taking the limit and
using \eqref{eq:disc-defect} gives
\begin{equation}\label{eq:target-normal-derivative}
 \lim_{t\nearrow1}
 \frac{I_{r'}-F(\gamma(t))F(\gamma(t))^H}{1-t}
 =\mathcal M_{Z_0}(D_\gamma).
\end{equation}

For $v\in N_{Z_0}$, define
\[
 u_v(\zeta)
 :=v\bigl(I_{r'}-F(\gamma(\zeta))
              F(\gamma(\zeta))^H\bigr)v^H.
\]
Theorem~\ref{thm:positive-image} gives $u_v\geq0$.  Also $u_v(1)=0$ because
$\gamma(1)=Z_0$ and $F(Z_0)F(Z_0)^H=I_{r'}$.  If
$h_v(\zeta):=vF(\gamma(\zeta))$, then
\[
 u_v(\zeta)=\|v\|^2-\|h_v(\zeta)\|^2.
\]
Since $h_v$ is holomorphic,
\[
 \Delta u_v(\zeta)
 =-4\|h_v'(\zeta)\|^2
 \leq0.
\]
Equations \eqref{eq:annihilate-normal},
\eqref{eq:disc-defect}, and \eqref{eq:target-normal-derivative} show
that
\[
 \lim_{t\nearrow1}\frac{u_v(t)}{1-t}
 =v\mathcal M_{Z_0}(D_\gamma)v^H=0.
\]
Since $\gamma$ and $F\circ\gamma$ extend holomorphically past
$\zeta=1$, the function $u_v$ is $C^2$ in $\Delta$ and $C^1$ near
$\zeta=1$.  Moreover,
\[
 u_v\geq0,\qquad \Delta u_v\leq0,\qquad u_v(1)=0,
\]
and the preceding computation gives a zero radial difference quotient
at $1$.  Thus all the hypotheses of the Hopf boundary lemma stated
above are satisfied.  If $u_v$ were not identically zero, that lemma
would give a strictly positive limit, contrary to the preceding
computation.  Hence $u_v\equiv0$.

Since $u_v\equiv0$, the vector $h_v$ has constant norm.  The identity
$\Delta\|h_v\|^2=4\|h_v'\|^2$ then gives $h_v'\equiv0$, so $h_v$ is
constant.  Evaluating it at $0$ and $1$ yields
\[
 vF(Z)=vF(Z_0).
\]
The point $Z\in\Omega_{r,s}$ was arbitrary, and the holomorphic identity
principle extends this equality to the connected neighborhood $U$:
each entry of $vF(Z)-vF(Z_0)$ is holomorphic on $U$ and vanishes on the
open set $\Omega_{r,s}$.
Therefore,
\[
 [\,v,vF(Z_0)\,]=v[\,I_{r'},F(Z)\,]\in V_{F(Z)}
\qquad(Z\in U),
\]
which proves \eqref{eq:fixed-incidence}.
\end{proof}

\begin{remark}\label{rem:global-vs-local}
The inequality \eqref{eq:target-defect-positive} is the only place where
we use the condition on the whole Shilov boundary.  It makes the
function $u_v$ nonnegative on the entire disc used in
Proposition~\ref{prop:fixed-incidence}; the Hopf boundary lemma then
shows that the vectors coming from the kernel at $Z_0$ remain in
$V_{F(Z)}$ for every $Z$.

If $F$ is defined only near an open piece of the Shilov boundary, that
disc need not remain in the neighborhood, so
\eqref{eq:target-defect-positive} is unavailable on the whole disc.
Thus the first-derivative argument here does not by itself give a local
higher-gap theorem.  In the first gap, Kim--Zaitsev obtained a stronger
local result by also studying higher derivatives \cite{KZ13}.  A related
use of the Hopf lemma appears in Kim's work at general Shilov points
\cite{KimDuke}.  For $r=1$, the argument above gives, under the stronger
global hypothesis, an alternative proof of the conclusion in
\cite{GaoGap}.
\end{remark}

\section{Proof of the gap theorem}\label{sec:proof}

\begin{proof}[Proof of Theorem~\ref{thm:main}]
Fix $Z_0\in S_{r,s}$ and use the unitary normalizations from
Section~\ref{sec:Levi}, so that
$Z_0=(I_r,0)$ and $F(Z_0)=(I_{r'},0)$.  Lemma~\ref{lem:capacity} and the strict upper
inequality in \eqref{eq:gap} give
\[
 g(Z_0)(s-r)\leq s'-r'<(k+1)(s-r).
\]
Since $g(Z_0)$ is an integer, $g(Z_0)\leq k$.
Lemma~\ref{lem:null-kernel} and
Proposition~\ref{prop:fixed-incidence} therefore produce a fixed
null space
\[
 K\subset V_{F(Z)}\quad(Z\in U),
 \qquad \dim K\geq r'-kr.
\]

Set $d=r'-kr$ and choose a $d$-dimensional subspace
$K_0\subset K$.  The normalization $F(Z_0)=(I_{r'},0)$ gives
\[
 V_{F(Z_0)}=\{[x,x,0]:x\in\C^{r'}\}.
\]
Hence $K_0=\{[x,x,0]:x\in E\}$ for a $d$-dimensional subspace
$E\subset\C^{r'}$.  Choose a unitary matrix on $\C^{r'}$ carrying
$E$ to the span of the first $d$ coordinate vectors, and apply it
simultaneously to the positive coordinates and the first $r'$ negative
coordinates.  These target coordinate changes preserve
$\Omega_{r',s'}$ and $S_{r',s'}$, and we may assume that $E$ is spanned
by the first $d$ coordinate vectors.

For $v\in E$, Proposition~\ref{prop:fixed-incidence} gives
\[
 [v,v,0]\in K_0\subset V_{F(Z)}
 =\{[u,uF(Z)]:u\in\C^{r'}\}.
\]
Comparison of the first $r'$ coordinates forces $u=v$, and hence
\[
 vF(Z)=(v,0)\qquad(Z\in U).
\]
Thus, after splitting the first $d$ rows and columns, we can write
\[
 F(Z)=
 \begin{pmatrix}
  I_d&0\\
  A(Z)&B(Z)
 \end{pmatrix}.
\]
For $Z\in\Omega_{r,s}$, Theorem~\ref{thm:positive-image} now gives
\[
 0\preceq I_{r'}-F(Z)F(Z)^H
 =\begin{pmatrix}
    0&-A(Z)^H\\
    -A(Z)&I_{r'-d}-A(Z)A(Z)^H-B(Z)B(Z)^H
  \end{pmatrix}.
\]
A positive semidefinite Hermitian block matrix with a zero diagonal
block has zero corresponding off-diagonal block.  Consequently,
$A(Z)=0$ on $\Omega_{r,s}$, and the holomorphic identity principle gives
$A\equiv0$ on $U$.  Setting $F_0:=B$, we obtain
\[
 F(Z)=
 \begin{pmatrix}
  I_d&0\\
  0&F_0(Z)
 \end{pmatrix}.
\]
This is \eqref{eq:main-block}.
The remaining block has $r'-d=kr$ rows and
$s'-d=kr+s'-r'$ columns.  On $S_{r,s}$, the identity
$F(Z)F(Z)^H=I_{r'}$ gives $F_0(Z)F_0(Z)^H=I_{kr}$.  On
$\Omega_{r,s}$, Theorem~\ref{thm:positive-image} gives
$I_{kr}-F_0(Z)F_0(Z)^H\succeq0$.

Finally, if the source normalization above is $Z\mapsto ZQ$, then the
argument applies to $\widetilde F(Z)=F(ZQ^H)$.  Replacing $Z$ by $ZQ$
absorbs this normalization into the remaining block
$F_0(Z)=\widetilde F_0(ZQ)$.  Hence only the target coordinate changes
appear in the statement of the theorem.
\end{proof}

\begin{corollary}
Under the numerical assumptions of Theorem~\ref{thm:main}, no such map
can satisfy
\[
 F(\Omega_{r,s})\subset\Omega_{r',s'}.
\]
\end{corollary}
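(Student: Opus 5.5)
The plan is to argue by contradiction, using the block form \eqref{eq:main-block} from Theorem~\ref{thm:main}. Suppose $F$ satisfies the hypotheses and the numerical assumptions, and also $F(\Omega_{r,s})\subset\Omega_{r',s'}$. By Theorem~\ref{thm:main}, after unitary changes of the target row and column coordinates, $F$ has the form \eqref{eq:main-block} with an identity block of size $d=r'-kr\geq1$. This positivity of $d$ is exactly where the hypothesis $r'>kr$ enters.

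Next I check that the target coordinate changes respect the assumption. The changes used are $W\mapsto UWV$ with $U,V$ unitary. They satisfy
\[
 I_{r'}-(UWV)(UWV)^H=U\bigl(I_{r'}-WW^H\bigr)U^H,
\]
so they preserve $\Omega_{r',s'}$. Hence the normalized map still sends $\Omega_{r,s}$ into $\Omega_{r',s'}$.

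Now evaluate at any $Z\in\Omega_{r,s}$, for instance $Z=0$. The block form gives
\[
 I_{r'}-F(Z)F(Z)^H=
 \begin{pmatrix}0&0\\0&I_{kr}-F_0(Z)F_0(Z)^H\end{pmatrix}.
\]
Its upper-left $d\times d$ block is zero, so the matrix is not positive definite. This contradicts $F(Z)\in\Omega_{r',s'}$.

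Equivalently, in the language of the paper, $V_{F(Z)}$ contains the nonzero null vector $[e_1,e_1,0]$, and a positive $r'$-plane contains no nonzero null vector. There is no real obstacle here. The only step needing care is confirming that the unitary normalizations preserve $\Omega_{r',s'}$, which the displayed identity settles.
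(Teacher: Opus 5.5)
Your proposal is correct and follows essentially the same route as the paper: the identity block of size $r'-kr>0$ from Theorem~\ref{thm:main} gives a zero diagonal block in $I_{r'}-F(Z)F(Z)^H$ (equivalently, a nonzero null vector in $V_{F(Z)}$), which contradicts positivity on $\Omega_{r',s'}$. Your explicit check that the unitary target normalizations $W\mapsto UWV$ preserve $\Omega_{r',s'}$ is a detail the paper leaves implicit.
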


\begin{proof}
Theorem~\ref{thm:main} gives a nonzero null vector in every $V_{F(Z)}$
because $r'-kr>0$.  But the form is positive definite on $V_{F(Z)}$
when $F(Z)\in\Omega_{r',s'}$, a contradiction.
\end{proof}

\end{document}